\newtheorem{thm}{Theorem}
\newtheorem{conj}[thm]{Conjecture}
\newtheorem{prob}{Problem}
\begin{document}

\author{Ben D. Lund\footnote{Department of Computer Science, Rutgers University, lundbd@gmail.com}\\George B. Purdy\footnote{Department of Electrical Engineering and Computing Systems, University of Cincinnati, george.purdy@uc.edu}\\Justin W. Smith\footnote{Department of Computer Science, Northern Kentucky University, justin.smith@nku.edu}}

\title{A Pseudoline Counterexample to the Strong Dirac Conjecture}
\maketitle

\begin{abstract}
We demonstrate an infinite family of pseudoline arrangements,
in which an arrangement of $n$ pseudolines has no member incident
to more than $4n/9$ points of intersection.
This shows the ``Strong Dirac'' conjecture to be false for pseudolines.

We also raise a number of open problems relating to possible differences between 
the structure of incidences between points and lines 
versus
the structure of incidences between points and pseudolines.
\end{abstract}

\section{Introduction}\label{sec:introduction}

A central problem of discrete geometry is to elucidate the structure of incidences between points and lines.
Until the recent explosion of applications of polynomial methods to problems in incidence geometry (\cite{Mat11},\cite{tao2013algebraic}, \cite{dvir2012incidence}), the tools most successfully applied to questions about incidences 
between points and lines 
could be immediately applied to prove equivalent results for incidences 
between points and pseudolines.
By an arrangement of pseudolines, we mean a set of simple closed curves in the real projective plane, any pair of which meet at a single crossing point \cite[p. 79]{Fel04}.

Given an arrangement $L$ of lines in the real projective plane, $\mathbb{P}^2$, 
let $r(L)$ be the maximum number of vertices on any line of $L$ (a vertex of $L$ is a point incident to at least $2$ lines of $L$).
In 1951, G. Dirac (working in the dual context of point sets) conjectured a lower bound on $r(L)$ \cite{Dir51}.
\begin{conj}[Strong Dirac]\label{conj:strongDirac}
Let $L$ be an arrangement of $n$ lines in $\mathbb{P}^2$ that do not all pass through a single point.
There exists a constant $c$ such that
\[r(L) \geq n/2 - c.\]
\end{conj}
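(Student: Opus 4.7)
The plan is to work in the dual setting, where $L$ corresponds to a set $P$ of $n$ non-collinear points in $\mathbb{P}^2$ and $r(L)$ becomes the maximum number of spanned lines through a single point of $P$; the goal is to exhibit some $p\in P$ lying on at least $n/2 - c$ such lines. I would proceed by strong induction on $n$, with small cases handled by inspection, and would compare what I obtain against the known tight examples (near-pencils, points on a conic) to calibrate the additive constant.

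For the inductive step I would combine two classical tools. Melchior's inequality guarantees at least three ordinary lines (lines through exactly two points) in every non-collinear point set, so I can try to peel off a ``cheap'' point that lies on an ordinary line and recurse on the remaining $n-1$ points. The Szemer\'edi--Trotter bound $O(n^{4/3})$ on point-line incidences would then be used to argue that either a point of very high incidence already exists, or the arrangement is regular enough that the deletion step does not undo the inductive hypothesis. The two ingredients are meant to feed each other: Melchior supplies the surgical operation, while Szemer\'edi--Trotter supplies the averaging needed to control how much incidence mass can be hidden elsewhere.

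The main obstacle is the sharp coefficient $1/2$. All known unconditional lower bounds are of the form $r(L)\geq n/k$ for some $k\gg 2$, and closing the gap to $n/2$ would require fresh information in the near-collinear regime, where Szemer\'edi--Trotter is loose. I would expect any successful attack to require a separate structural case for near-extremal configurations---points nearly lying on a single line, or placed on a conic, which essentially saturate the conjectured bound up to the additive $c$---and crafting that structural theorem is the step I expect to break down. Given that the conjecture has stood open since 1951, it is overwhelmingly likely that the authors state it here as motivation for the negative pseudoline construction and the weaker Dirac-type analogues promised in the abstract, rather than as a theorem they intend to establish.
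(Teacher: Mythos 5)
This statement is a conjecture, not a theorem of the paper: it has been open since 1951, and the authors state it only as background. There is no proof in the paper to compare against, and your proposal is not a proof either --- you acknowledge yourself that the decisive step (a structural theorem for near-extremal, near-collinear configurations) is missing. So at best this is a research plan, and the honest answer is that the statement remains unproven.

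More importantly, the paper's own main result shows that your proposed toolkit cannot possibly close the gap. Both ingredients you rely on --- Melchior's inequality (which follows from Euler's formula applied to the cell complex of the arrangement) and the Szemer\'edi--Trotter incidence bound --- are purely combinatorial/topological and hold verbatim for pseudoline arrangements. But Section \ref{sec:counterexample} exhibits an infinite family of arrangements of $n = 18j+7$ pseudolines in which no member is incident to more than $8j+2 < 4n/9$ vertices, so the conclusion $r(L) \geq n/2 - c$ is simply false in the pseudoline setting. Any argument assembled solely from tools that are insensitive to the distinction between lines and pseudolines is therefore doomed from the start: a successful proof of the Strong Dirac conjecture must exploit the straightness of lines (e.g.\ metric or algebraic structure) in an essential way. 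This is precisely the point the authors make in the introduction --- the techniques behind the Weak Dirac theorem generalize broadly, but ``probably can't be used to prove the Strong Dirac conjecture.''
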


In this paper, we show that the old and widely believed ``Strong Dirac" conjecture is false when generalized from arrangements of lines to arrangements of pseudolines.
We give an explicit example of an infinite family of pseudoline arrangements $\mathcal{L}$ for which $r(L) \leq 4n/9$ for any $L \in \mathcal{L}$

In 1961, Erd\H{o}s proposed a weaker version of the Strong Dirac conjecture \cite{Erd61}.  It was proved independently in 1983 both by Beck\cite{Beck83} and by Szemer\'edi and Trotter\cite{ST83}, and holds for arrangements of pseudolines.
\begin{thm}[Weak Dirac]\label{thm:weakDirac}
Let $L$ be an arrangement of $n$ pseudolines in $\mathbb{P}^2$ that do not all pass through a single point.
Then
\[r(L) = \Omega(n).\]
\end{thm}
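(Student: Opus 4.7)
The plan is to imitate Beck's proof for lines in the pseudoline setting. The first prerequisite is checking that the Szemer\'edi--Trotter incidence bound carries over to pseudolines: Sz\'ekely's crossing-number argument only uses the single-crossing axiom, so one obtains $I(P, L) = O(|P|^{2/3}|L|^{2/3} + |P| + |L|)$ for any set $P$ of points and $L$ of pseudolines, together with the usual threshold consequence that the number $N_k$ of points lying on at least $k$ of the $n$ pseudolines is $O(n^2/k^3 + n/k)$ for $k \geq 2$.

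With this in hand, I would split on the maximum multiplicity $m^*$ of an intersection point, with a small parameter $\epsilon > 0$ to be fixed later. In Case~A, $m^* \geq \epsilon n$: pick a point $p$ of multiplicity $m^*$, and use the non-concurrency hypothesis to find a pseudoline $\ell \in L$ not through $p$. Since two pseudolines meet at most once, the $m^*$ pseudolines through $p$ each contribute a distinct intersection point on $\ell$, giving $r(L) \geq m^* = \Omega(n)$. In Case~B, $m^* < \epsilon n$: here I would argue $|P| = \Omega(n^2)$ and conclude via the trivial double-counting $r(L) \geq 2|P|/n$.

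For the $|P| = \Omega(n^2)$ estimate in Case~B, I would use the pair-counting identity $\binom{n}{2} = \sum_{k \geq 2}(k-1)N_k$ and split it at a large constant $K$. The Szemer\'edi--Trotter threshold bound controls the tail $\sum_{k > K}(k-1)N_k$ by $O(n^2/K + n m^*)$; taking $K$ large and $\epsilon$ small makes this tail at most $\binom{n}{2}/2$. The head $\sum_{2 \leq k \leq K}(k-1)N_k$ is then $\Omega(n^2)$, and since $N_k \leq N_2 = |P|$, it is bounded above by $\binom{K}{2}|P|$, forcing $|P| = \Omega(n^2)$.

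The main obstacle I expect is the conceptual step of confirming that the Szemer\'edi--Trotter theorem extends to pseudolines in $\mathbb{P}^2$ with only the axiom that two curves cross at most once (rather than requiring smoothness or algebraicity). Once this is settled, the two-case analysis and dyadic bookkeeping are routine.
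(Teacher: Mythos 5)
Your proposal is correct, and it is essentially Beck's original argument (via Sz\'ekely's crossing-number proof of Szemer\'edi--Trotter), which is sound: the graph in Sz\'ekely's argument is simple because two pseudolines share at most one point, the identity $\binom{n}{2}=\sum_{k\ge 2}(k-1)N_k$ is the right one for $N_k$ counting points on \emph{at least} $k$ pseudolines, and the two-case split at $m^*\ge \epsilon n$ versus $m^*<\epsilon n$ closes cleanly. Note, however, that the paper does not actually prove Theorem~\ref{thm:weakDirac}; it cites Beck and Szemer\'edi--Trotter and instead builds a more general engine, Lemma~\ref{thm:weak-dirac-inc-struct}, which derives Weak-Dirac-type conclusions for any $\alpha$-curve combinatorics from any incidence bound of the form $t_k=O(n^{2+\delta}/k^{2+\epsilon}+n/k)$ valid for $k=\Omega(n^{\zeta})$. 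The structural difference is in how the two arguments handle the dichotomy: you split the pair-counting sum at a fixed constant $K$ and at the maximum multiplicity $m^*$, which suffices because the Szemer\'edi--Trotter exponents are lossless ($\delta=0$); the paper instead performs a full dyadic decomposition of pairs by the degree of their common point (Proposition~\ref{prop:midSum}) and, in the high-multiplicity case, iteratively extracts up to $\alpha$ high-degree points to exhibit a complete bipartite $K_{\Omega(n),\alpha}$ before invoking the same ``a curve missing the pencil picks up $\Omega(n)$ distinct intersection points'' step that you use in your Case~A. What your route buys is a shorter, self-contained proof of the specific pseudoline statement with the sharp $\Omega(n)$ bound; what the paper's route buys is uniform applicability to weaker incidence bounds (with losses $n^{\delta}$ and restricted ranges of $k$) and to curves meeting $\alpha>1$ times, at the cost of a weaker conclusion $\Omega(n^{1-\gamma})$ in those settings. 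The only point worth flagging in your write-up is the remark you yourself make: you should verify (or cite) that the crossing-number inequality applies to drawings arising from pseudoline arrangements in $\mathbb{P}^2$, e.g.\ by passing to an affine chart or using the projective-plane version of Euler's formula; this is standard but is the one place where a reader could ask for detail.
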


For straight lines, the constant in Theorem \ref{thm:weakDirac} can be taken to be $1/37$, as shown in \cite{payne2012progress}.
The proof in \cite{payne2012progress} relies on Hirzebruch's inequality \cite{hirzebruch1986singularities}, an algebraic result not known to hold for pseudolines.

Traditionally, the Strong Dirac conjecture has been studied from the perspective of point sets.
In that setting, the conjecture is that any set of $n$ points includes a point incident to $n/2 - c$ lines spanned by the point set.
However, there are symmetries inherent in known extremal examples that are easier to see in the context of line arrangements.
We briefly review these examples in Section \ref{sec:strongDirac}.

In Section \ref{sec:wedges}, we describe a technique of visualizing line and pseudoline arrangements with dihedral symmetry by presenting only a single wedge, which can be used to reconstruct the entire arrangement.
This method was introduced by Eppstein on his blog \cite{Epp05}, and further developed by Berman in an investigation of simplicial pseudoline arrangements \cite{Ber08}.

In Section \ref{sec:counterexample}, we present an infinite family of arrangements of pseudolines, such that an arrangement of $n$ pseudolines from this family has no member incident to more than $4n/9 - 10/9$ vertices of the arrangement.
The family of pseudolines presented here was previously studied by Berman \cite{Ber08}, in the context of simplicial arrangements.
This is the first time an infinite family of pseudolines has been demonstrated to violate the conclusion of the Strong Dirac conjecture.

In Section \ref{sec:openProblems}, we ask a number of questions relating to the central question of what differences exist between the structure of incidences between points and lines 
versus
the structure of incidences between points and pseudolines.

\section{Strong Dirac conjecture}\label{sec:strongDirac}
In 1951, Dirac conjectured that among any set of $n$ non-collinear points, $P$, there must exist a point incident to at least $\lceil\frac{n}{2}\rceil$ lines spanned by $P$ \cite{Dir51}.
This bound can be attained for odd $n$ when the points lie on two intersecting lines.
Typically,  Dirac's original conjecture is stated in a slightly weaker form (i.e., the ``Strong Dirac'').

In \cite{AIKN11}, Akiyama et al. show that the $\lfloor\frac{n}{2}\rfloor$ bound (i.e., the Strong Dirac conjecture with $c = 0$) can be attained for all sufficiently large $n$
except those of the form $12k+11$ (which they left as an open problem).
However, there exists a family of configurations, with an arbitrarily large number of points, for which the conjecture is false for $c=0$.
This infinite family of counterexamples is due to Felsner and contains
$6k+7$ points with none incident to more than $3k+2$ spanned lines when $k$ is even, and $3k+3$ when $k$ is odd. \cite[p. 313]{BMP05}
The dual form for this family is demonstrated in Figure \ref{fig:felsner}.

\begin{figure}[htb]
 \centering
 \includegraphics[width=8cm]{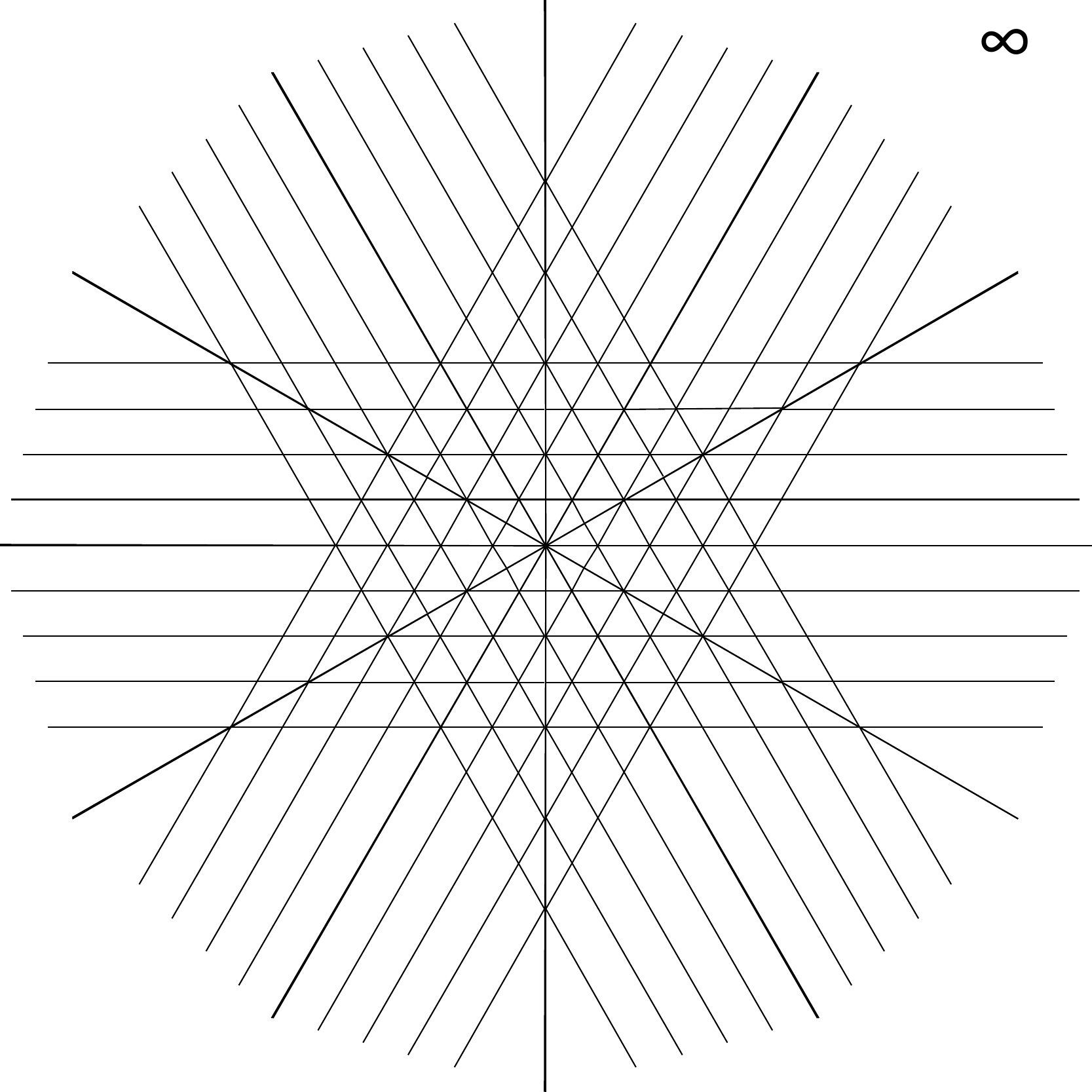}
\caption{The dual of Felsner's arrangement with $6k+7=31$ lines (including the line at infinity)
and no line incident to more than $3k+2=14$ points of intersection.}
\label{fig:felsner}
\end{figure}

No infinite family of arrangements of $n$ lines is known such that each member has fewer than $n/2 - 3/2$ intersection points, but Gr\"unbaum found several small arrangements with that property \cite{Gru72, Gru09}.
The line arrangement A[25,5] in \cite{Gru09} is the smallest member of the infinite family of pseudoline arrangements presented below.

\subsection{Wedge presentation of symmetric pseudoline arrangements}\label{sec:wedges}
A beautiful feature of Figure \ref{fig:felsner} is its symmetry.
This drawing has the symmetry of a regular hexagon (i.e., the dihedral group $D_6$).
While studying simplicial pseudoline arrangements (ones in which each planar face has three sides), Eppstein observed that arrangements with dihedral symmetry can be generated, similar to a kaleidoscope, from the contents of a single ``wedge'' \cite{Epp05}.
 Figure \ref{fig:eppstein} shows a single wedge from Felsner's arrangement.
\begin{figure}[htb]
 \centering
 \includegraphics[width=10cm]{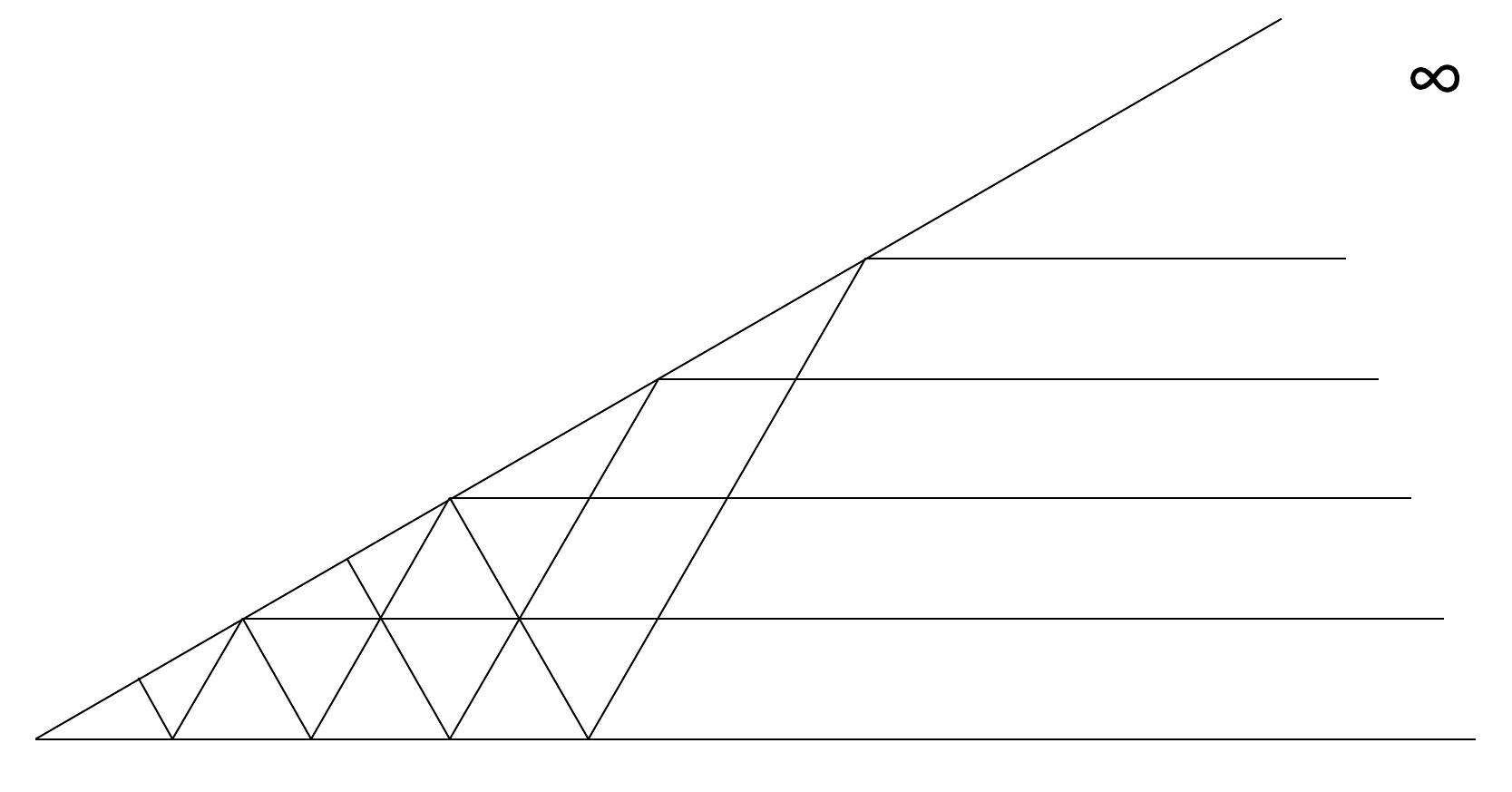}
\caption{A single wedge from Felsner's arrangement.}
\label{fig:eppstein}
\end{figure}

He noted that the entire path of a line through an arrangement
can be traced by considering that line to be ``bouncing'',
like a laser beam bouncing off mirrors, from one side of the wedge to the other.
(Notice that in Figure \ref{fig:eppstein} the beams must ``retrace'' their path after the third bounce.)
In fact for straight-line arrangements, this bouncing must follow the
\emph{law of reflection}: the angle of incidence equals the angle of reflection.
By applying basic trigonometry, one may deduce for straight-line arrangements the number and locations of the bounces
as a function of the wedge angle and the beam's initial angle of incidence.

To generate an arrangement from a wedge, the wedge must have an angle of $\pi/k$ for some positive integer $k \geq 2$.
The arrangement is produced by alternately rotating and duplicating the wedge or its mirror image,
$k$ times each, so that they fill the plane.

For pseudoline arrangements, the ``bouncing'' beams need not obey the law of reflection.

As with Felsner's arrangement a beam might retrace its path after the $\lceil\frac{k}{2}\rceil^{\rm th}$ bounce. Berman, in \cite{Ber08}, further develops Eppstein's ``kaleidoscope'' method
to construct and classify many types of symmetric simplicial pseudoline arrangements (including the one presented in Section \ref{sec:counterexample}).

\subsection{Pseudoline counterexample to Strong Dirac conjecture}\label{sec:counterexample}
\label{sec:construction}

\begin{thm}
For any $j \in \mathbb{N}^+$, there exists an arrangement of $n = 18j + 7$
pseudolines such that no pseudoline is incident to more than
$8j + 2$ vertices.
\end{thm}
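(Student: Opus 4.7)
The plan is to produce the arrangement explicitly using the Eppstein--Berman wedge/kaleidoscope method described in Section~\ref{sec:wedges}. One fixes a wedge of angle $\pi/k$ for a constant $k$ independent of $j$, filled with a parametric family of beams that bounce between the two walls; the full arrangement is then obtained by rotating and reflecting the wedge $2k$ times and adjoining the line at infinity. The beam pattern inside the wedge should be designed so that at $j=1$ the resulting arrangement coincides with Gr\"unbaum's A[25,5] cited above (since $18\cdot 1+7=25$ and $8\cdot 1+2=10$, this supplies the base case), and so that incrementing $j$ inserts a fixed, uniform block of new beams into the wedge, increasing the pseudoline count by $18$ and the incidence bound by $8$.

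I would carry out the proof in three steps. First, specify the wedge combinatorially, following the simplicial-arrangement template established by Berman~\cite{Ber08}, and verify that its $2k$-fold kaleidoscopic image is a genuine pseudoline arrangement, i.e., that every pair of pseudolines meets in exactly one point. Second, count: beams whose trajectory is fixed by a single reflection of the dihedral group contribute $k$ pseudolines each to the final arrangement, generic beams contribute $2k$ each, and the line at infinity contributes $1$, so the number and type of beams in the wedge must be chosen so that the totals sum to $18j+7$. Third, bound the vertex count on each pseudoline: by symmetry there are only finitely many orbit classes, so it suffices to check, for one representative of each class, the number of arrangement vertices traversed along its trajectory through a single wedge, remembering that each bounce off a wedge wall identifies two wedge-interior crossings into one arrangement vertex.

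The main obstacle is this third step: designing the beam pattern so that the $8j+2$ bound is saturated but never exceeded by any orbit. Each orbit class contributes an incidence count of the form $\alpha_i j + \beta_i$, with coefficients determined by how its representative beam weaves through the wedge and by how many of the newly inserted $j$-indexed beams it crosses. Since Berman's classification already supplies the combinatorics of valid bouncing patterns, the remaining work reduces to verifying a finite list of inequalities $\alpha_i j + \beta_i \le 8j+2$, one per orbit class, and to exhibiting at least one orbit---presumably the ``longest-trajectory'' class---that attains equality for every $j$. Once this bookkeeping is in place, the theorem follows, and it can be read off that the resulting family violates Conjecture~\ref{conj:strongDirac}.
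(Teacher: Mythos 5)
Your proposal correctly identifies the framework (the Eppstein--Berman wedge construction, with Gr\"unbaum's A[25,5] as the $j=1$ base case), but it inverts the way the construction scales with $j$, and this is not cosmetic. In the paper's construction the wedge angle is $\pi/(6j+2)$ --- it \emph{shrinks} as $j$ grows --- while the number of beams stays fixed at two (one red, one blue), alongside the two wedge edges and the line at infinity; the pseudoline count $3(6j+2)+1=18j+7$ grows because the dihedral group gets larger, and there are always only a bounded number of symmetry classes to check. You instead fix the wedge angle $\pi/k$ with $k$ independent of $j$ and insert $\Theta(j)$ new beams. Under that scheme the number of orbit classes grows linearly in $j$, so your claim that ``by symmetry there are only finitely many orbit classes'' fails, and your finite list of inequalities $\alpha_i j+\beta_i\le 8j+2$ becomes an unbounded list with no uniform way to verify it.

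More importantly, the actual mathematical content of the theorem is the coincidence structure, and your sketch leaves it unspecified. Each pseudoline crosses the other $18j+6$ exactly once apiece, so to lie on at most $8j+2$ vertices its crossings must be compressed into multiple points at an average rate exceeding two crossings per vertex, uniformly in $j$. The paper engineers this explicitly: all $6j+2$ mirror lines meet at the wedge apex, every third bounce of the blue beam coincides with a bounce of the red beam ($b_{3i}=r_i$ for $i\le j$), and the inductive step from $j-1$ to $j$ places the three new bounces of each beam so that only eight new vertices accrue to any pseudoline. Your third step names exactly this as ``the main obstacle'' but does not resolve it; as written, the design of the beam pattern that makes the count come out to $8j+2$ --- which is the proof --- is the part that is missing.
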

We will describe the construction of a wedge for a pseudoline arrangement for arbitrary $j$, and show that it has the claimed number of pseudolines and intersection property.
We refer to Berman \cite[Fig.11]{Ber08} for a proof that the described wedge actually represents a pseudoline arrangement.

For an arbitrary $j$, the wedge angle will be $\pi/(6j+2)$.
There are four distinct symmetry classes of pseudolines, plus the line at infinity.
Two of these will be represented by the sides of the wedge; we will call these the \emph{top} and \emph{bottom} edges.
Two will be represented by beams; we will call these the \emph{red} and \emph{blue} beams. See Figure \ref{fig:pseudowedge1}.

Let $r_i^{(j)}$ be the point at which the $i^{\rm th}$ bounce of the red beam occurs along one of the two edges, counting from infinity.
Likewise, let $b_i^{(j)}$ be the point at which the $i^{\rm th}$ bounce of the blue beam occurs.
When the implied value of $j$ is obvious, we simply refer to these points as $r_i$ and $b_i$.

After the beams reach the points $r_{3j+1}$ or $b_{3j+1}$, respectively, the beams ``retrace'' their paths.
More specifically for any $k > 3j+1$, $r_k = r_{6j+2-k}$ and $b_k = b_{6j+2-k}$.

We call $r_{3j+1}$ and $b_{3j+1}$ the ``terminating points'' for their respective beams.
Prior to reaching its terminating point, every third bounce of the \emph{blue} beam coincides with a bounce of the \emph{red} beam.
For all $j$, $r_i$ = $b_{3i}$ when $i \leq j$.
The two beams are parallel to the bottom edge before the first bounce, and both $b_1$ and $r_1$ are on the top edge.
\begin{proof}
We proceed by induction.
For $j=1$, the theorem holds; the arrangement generated from this wedge contains $3(6j+2) + 1= 25$ pseudolines, each of which incident to at most $8j+2=10$ vertices. See Figure \ref{fig:pseudowedge1} for the wedge, and Figure \ref{fig:pseudo1} for the associated arrangement.

\begin{figure}[!htb]
 \centering
 \includegraphics[width=10cm]{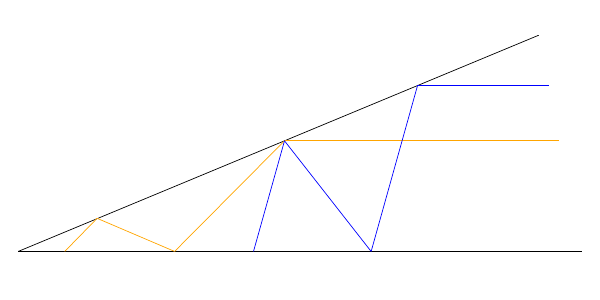}
\caption{The wedge for $j=1$, the base case for our induction.}
\label{fig:pseudowedge1}
\end{figure}

\begin{figure}[!htb]
 \centering
 \includegraphics[width=10cm]{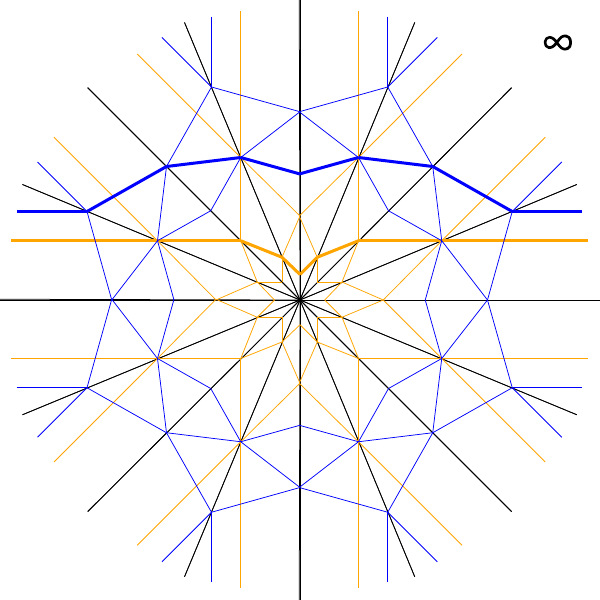}
\caption{The arrangement for $j=1$, containing $3(6j+2) + 1= 25$ pseudolines. Each pseudoline is incident to at most 10 vertices.}
\label{fig:pseudo1}
\end{figure}

Assume that the theorem holds for $j-1$.
We start with the wedge produced from the $(j-1)^{\textrm{th}}$ case,
and adjust its angle to be $\pi/(6j+2)$.
Let $r_i^{(j-1)}$ and $b_i^{(j-1)}$ be the points of the
$i^{\textrm{th}}$ bounce of the red and blue beams, respectively, from the preceding case.
We define the $r_i^{(j)}$ and $b_i^{(j)}$ as follows:
\begin{itemize}
\item For $i < 3j-1$, let $r_i^{(j)} = r_i^{(j-1)}$ and $b_i^{(j)} = b_i^{(j-1)}$.
\item For $i > 3j+3$, let $r_i^{(j)} = r_{i-6}^{(j-1)}$ and $b_i^{(j)} = b_{i-6}^{(j-1)}$.
\end{itemize}

We must specify for the $j^{\textrm{th}}$ case how to construct $\{r_{3j-1}, r_{3j}, r_{3j+1}\}$ and $\{b_{3j-1}, b_{3j}, b_{3j+1}\}$ for their respective beams.
Note that $r_{3j+2}=r_{3j}$ and $r_{3j+3}=r_{3j-1}$, and likewise for the $b_i$.

We begin with the simpler case of extending the \emph{red} beam. We place $r_{3j-1}$ on the side opposite of the wedge from $r_{3j-2}$, and continue to alternate sides when placing $r_{3j}$ and $r_{3j+1}$, each slightly closer to the corner of the wedge than the previous.

To extend the \emph{blue} beam we must cross the \emph{red} beam once, placing $b_{3j-1}$ on the opposite side of the wedge from $b_{3j-2}$.
The subsequent point, $b_{3j}$, coincides with $r_j$. 
As stated previously, $b_{3i} = r_i$ when $i\leq j$.
Lastly, place $b_{3j+1}$ at an appropriate location on the opposite side of the edge (farther from the corner than $r_{j+1}$).
With this, the construction is complete.

We must now consider the additional vertices (relative to the $(j-1)^{\textrm{th}}$ case) formed on the lines, resulting from this construction.
For the \emph{blue} lines, a total of eight vertices were added, and likewise, eight more were added for the \emph{red} lines.
The edges of the wedge correspond to the lines of the arrangement forming axes of symmetry.
For one class of axes, we added eight vertices each. To the other, we added only six each. 
(Whether the lines getting an additional six vertices correspond to the ``top'' or ``bottom'' of the wedge depends on the parity of $j$.)
See Figure \ref{fig:pseudowedge2} for the $j=2$ case, i.e., the first complete extension from the base case.

\begin{figure}[!htb]
 \centering
 \includegraphics[width=14cm]{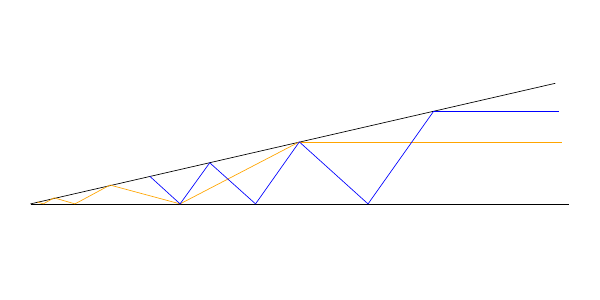}
\caption{The wedge for $j=2$.}
\label{fig:pseudowedge2}
\end{figure}

In the resulting arrangement, there will be $18j+7 = (18(j-1) + 7) + (6 \cdot 3)$ lines with none incident to more than $8j+2 = (8(j-1)+2)+8$ vertices, 
completing the inductive proof.
\end{proof}

\FloatBarrier

\section{Open problems}\label{sec:openProblems}
The main interest of the pseudoline arrangement presented here is that it shows that a natural conjecture that is widely believed to be true for straight lines is definitely false for pseudolines.
This is relevant to a more general question: how do the structural constraints on the incidences between straight lines and points differ from those on incidences between pseudolines and points?
In this section, we raise a number of specific open problems on this general theme.

\subsection{Variations on the Strong Dirac}

There is no reason to expect that $4/9$ is the best possible constant in the Weak Dirac theorem for pseudolines, and the gap between $4/9$ and the best known lower bound is quite large.
\begin{prob}
What is the supremum of values $c$ for which
\[r(L) \geq cn + o(n)\]
for all pseudoline arrangements $L$?
\end{prob}

The next question is whether (and by how much) the bound on $r(L)$ for line arrangements differs from that for pseudoline arrangements.
\begin{prob}
Is it possible to prove a lower bound on $r(L)$ that holds for line arrangements and not for pseudoline arrangements?
\end{prob}

One feature of the family of pseudoline arrangements presented in Section \ref{sec:counterexample} is that $(n-1)/3$ lines are all incident to a single vertex.
A natural question is whether this is an essential feature of any pseudoline counterexample to the Strong Dirac conjecture.
\begin{prob}
Is there an infinite family of arrangements of $n$ pseudolines, such that
\begin{itemize}
\item no vertex of any arrangement in the family is incident to $\Omega(n)$ pseudolines, and
\item no member of any arrangement is incident to more than $n/(2 + \epsilon)$ vertices for some $\epsilon > 0$?
\end{itemize}
\end{prob}

The authors are not even aware of an infinite family of pseudoline arrangements such that no vertex is incident to $\Omega(n)$ pseudolines and no pseudoline is incident to more than $cn$ vertices for some $c<1$.

Both Felsner's example, and the example presented in Section \ref{sec:counterexample} have dihedral symmetry.
Assuming that the Strong Dirac conjecture holds for line arrangements, it may be easier to prove for the special case of dihedrally symmetric line arrangements.
\begin{prob}\label{prob:dihedral}
Does the Strong Dirac conjecture hold for line arrangements with dihedral symmetry?
\end{prob}

The example presented here shows that any method used to give an affirmative answer to Problem \ref{prob:dihedral} would need to be able to distinguish between line arrangements and pseudoline arrangements.

\subsection{Dirac-Motzkin for Pseudolines}
Another classic question from incidence geometry concerns the minimum number of ordinary vertices in an arrangement of lines.
An ordinary vertex is one that is incident to exactly $2$ lines of the arrangement.
The famous conjecture on this question was known, until its recent proof by Green and Tao \cite{green2012sets}, as the Dirac-Motzkin conjecture.

\begin{thm}\label{th:GreenTao}
Let $L$ be an arrangement of $n$ lines in the plane, not all through one point.
Suppose that $n>n_0$ for a sufficiently large constant $n_0$.
Then, $L$ determines at least $n/2$ ordinary vertices.
\end{thm}

 The best result on the Dirac-Motzkin problem prior to Green and Tao's proof of Theorem \ref{th:GreenTao} was by Csima and Sawyer \cite{csima1993there}.
They showed that, if $L$ is an arrangement of $n>7$ lines in the plane, not all through one point, then $L$ determines at least $6n/13$ ordinary vertices.

A key difference between the proof of Csima and Sawyer and that of Green and Tao is that the result of Csima and Sawyer can be generalized to apply to arrangements of pseudolines in a straightforward manner \cite{lenchner2008sylvester}, but the result of Green and Tao relies on algebraic statements, including the Cayley-Bacharach theorem, that do not apply to pseudolines.

This raises the question: is the generalization of Theorem \ref{th:GreenTao} for arrangements of pseudolines true?

\begin{prob}
Is there an arrangement of $n>13$ pseudolines, not all through one point, that determines fewer than $n/2$ simple vertices?
\end{prob}

\bibliographystyle{IEEEtranS}
\bibliography{pseudolines}
\end{document}